\newtheorem{theorem}{Theorem}
\newcommand{\R}{\mathbb{R}}
\newcommand{\bd}[1]{\boldsymbol{#1}}
\newcommand{\V}{\mathrm{V}} 
\newcommand{\M}{\mathrm{M}}
\newcommand{\dx}{\mathrm{d}x}
\begin{document}

\title{Polyconvexity and Existence Theorem for Nonlinearly Elastic Shells}
\author{Sylvia Anicic}
\date{}


\maketitle
\begin{center}
\small{
Universit\'e de Haute-Alsace, France\\
Email: sylvia.anicic@uha.fr}
\end{center}

\begin{abstract}
We present an existence theorem for a large class of nonlinearly
elastic shells with low regularity in the framework of a two-dimensional theory involving the mean and Gaussian curvatures. We restrict our discussion to hyperelastic materials, that is to elastic materials possessing a stored energy function. Under some specific conditions of polyconvexity, coerciveness and growth of the stored energy function, we prove the existence of global minimizers. In addition, we define a general class of polyconvex stored energy functions which satisfies a coerciveness inequality.
\end{abstract}

\noindent {\bf Keywords} Shell, Existence, Minimizer, Polyconvexity, Hyperelasticity, Nonlinear elasticity, Helfrich energy, Calculus of variations

\noindent {\bf Mathematics Subject Classification} 74K25, 74B20, 74G65, 74G25, \\ 49J20, 35A01, 35Q74

\section{Introduction}
\label{Introduction}

A shell is a three-dimensional elastic body which occupies a volume contained between two surfaces (in general parallel) close to each other. A natural way to define a shell is to consider a surface $S$ embedded in $\R^3$ and to thicken it on each side.
In response to given loads, the displacement and the stress arising in an elastic shell, viewed as a three-dimensional body, are predicted by the equations of nonlinear three-dimensional elasticity. To this day, there are two theories of existence of solutions for these equations: one based on the implicit function theorem and the other, due to a seminal paper of Ball \cite{Ball77}, based on the minimization of functionals. This latter asserts that if the constituting material is hyperelastic and the associated stored energy function satisfies some specific conditions of convexity (called polyconvexity), coerciveness and growth, the minimization problem has at least one solution.

As a shell is "almost" a surface and may even be ultrathin such as polymer films or biological membranes, shell modeling is part of a two-dimensional theory involving only the deformation of the surface $S$. This approach yields a variety of two-dimensional nonlinear shell models, which can be classified into two categories. 

A first category consists of two-dimensional nonlinear shell equations obtained from the three-dimen\-sional elasticity by means of an asymptotic analysis when the thickness goes to zero. The question of how to rigorously identify and justify the nonlinear two-dimensional shell equations from the three-dimensional elasticity was finally settled in two key contributions, one by Le Dret \& Raoult \cite{LeDretRaoult95}
 and  one by Friesecke, James, Mora \& M\"uller \cite{FrieseckeJames03}, who respectively justified the equations of a nonlinearly elastic membrane shell and those of a nonlinearly elastic flexural shell through the use of $\Gamma$-convergence theory. This theory automatically provides the existence of a minimizer for the $\Gamma$-limit functional. Specifically for the nonlinearly elastic flexural shell equations, Ciarlet \& Coutand \cite{CiarletCoutand98} have established the existence of a minimizer by direct methods in calculus of variations.

A second category consists of two-dimensional nonlinear shell models obtained from the three-dimensional elasticity by restricting the range of admissible deformations and stresses by means of specific a priori assumptions such as Cosserat assumptions (Simo \& Fox \cite{SimoFox89}) or Kirchhoff-Love assumptions (Koiter \cite{Koiter66}). The topic of existence of solutions for these models has been treated for various types of shells and with different techniques in the literature (Antman \cite{Antman76-1,Antman76-2}, Ciarlet \& Gratie \cite{CiarletGratie06}, Ciarlet, Gogu \& Mardare \cite{CiarletGogu13}, B\^{\i}rsan \& Neff \cite{BirsanNeff14}, Bunoiu, Ciarlet \& Mardare \cite{BunoiuCiarlet15} and Ciarlet \& Mardare \cite{CiarletMardare16}).

In Sect.~\ref{Existence}, we present a general theorem of existence of global minimizers for nonlinear shells
in the framework of a two-dimensional theory involving the mean and Gaussian curvatures. Inspired by the approach of Ball \cite{Ball77}, we define a notion of a polyconvex and orientation-preserving 
stored energy function for shells. As an example, the Helfrich \cite{Helfrich73} density energy function used by the mechanical community for modeling biological membranes, is polyconvex but not orientation-preserving. In Sect.~\ref{Example}, we introduce a class of polyconvex stored energy functions for $G^1$ shells which satisfies a coerciveness inequality.   

\section{Notations}
\label{Notations}

In all that follows, Greek indices and exponents range in the set $\{1,2\}$ while Latin indices and exponents range in the set $\{1,2,3\}$ (except when they are used for indexing sequences). We use the Einstein summation convention with respect to repeated indices and exponents.

The three-dimensional Euclidean space is identified with $\R^3$ by choosing an origin and a Euclidean basis. Vector and tensor fields are denoted by boldface letters. The Euclidean norm, the inner product, the vector product and the tensor product of two vectors $\bd{u}$ and $\bd{v}$ in 
$\R^3$ are respectively denoted $|\bd{u}|$, $\bd{u}\cdot \bd{v}$, 
$\bd{u}\wedge \bd{v}$ and $\bd{u} \otimes \bd{v}$. The sets of all $m\times n$ real matrices are denoted $\mathbb{M}^{m\times n}$. For a real matrix $\bd{A}\in \mathbb{M}^{m\times n}$, the notation $|\bd{A}|:=\mathrm{tr}\,(\bd{A}^{\mathrm{T}}\bd{A})^{1/2}$ stands for the Frobenius norm.

A domain $\omega \subset \R^2$ is a bounded, connected, open set with a Lipschitz-continuous boundary $\gamma :=\partial \omega$, the set $\omega$ being locally on the same side of $\gamma$. A generic point in the set 
$\overline{\omega}$ is denoted by $x=(x_{\alpha})$ and partial derivatives, in the classical or distributional sense, are denoted $\partial_{\alpha}:=\partial / \partial x_{\alpha}$.

The notation $L^p(\omega;\R^3)$ with $1\leqslant p <\infty$ designates the space of vector fields $\bd{\xi}=(\xi_i):\omega \rightarrow \R^3$ with components $\xi_i$ in the usual Lebesgue space $L^p(\omega)$. It is equipped with the norm 
\[
\|\bd{\xi}\|_{p}:=\left(\int_{\omega}|\bd{\xi}(x)|^p\, \dx \right)^{1/p}\quad
\textnormal{for any }\bd{\xi}\in L^p(\omega;\R^3).
\]
The space $W^{1,p}(\omega;\R^3)$ denotes the space of vector fields
$\bd{\xi}=(\xi_i):\omega \rightarrow \R^3$ with components $\xi_i$ in the usual Sobolev space $W^{1,p}(\omega)$. It is equipped with the norm
\[
\|\bd{\xi}\|_{1,p}:=\left(\|\bd{\xi}\|_{p}^p+\sum_{\alpha=1}^2\|\partial_{\alpha}\bd{\xi}\|^p_{p} \right)^{1/p}\quad
\textnormal{for any }\bd{\xi}\in W^{1,p}(\omega;\R^3).
\]
The space $W^{1,\infty}(\omega;\R^3)$ consists of vector fields
$\bd{\xi}=(\xi_i):\omega \rightarrow \R^3$ with components $\xi_i$ in the usual Sobolev space $W^{1,\infty}(\omega)$ of Lipschitz continuous functions on $\overline{\omega}$.

Strong and weak convergences are respectively denoted $\rightarrow$ and
$\rightharpoonup$.

\section{An existence theorem}
\label{Existence}

First, let us briefly recall the framework considered in the context of three-dimen\-sional elasticity. Let $\Omega\subset \R^3$ be a domain considered as the reference configuration of an elastic body. The admissible deformations $\bd{\Theta}:\Omega \rightarrow \R^3$
satisfy
\[
\det \nabla \bd{\Theta}>0.
\]
Now we consider a shell $\mathcal{C}$ with thickness $2\varepsilon>0$ whose reference configuration is the set
\[
\mathcal{C}=\{\bd{\Phi}(x,z)=\bd{\varphi}(x)+z\bd{a}_3(x),\quad (x,z)\in \Omega:=
\omega \times (-\varepsilon,\varepsilon)\}
\]
where $\omega\subset \R^2$ is a domain and 
\[
\bd{a}_3(x):=\frac{\partial_1 \bd{\varphi}(x)\wedge
\partial_2 \bd{\varphi}(x) }{|\partial_1 \bd{\varphi}(x)\wedge
\partial_2 \bd{\varphi}(x)|}
\]
is the unit normal vector to the midsurface $S:=\bd{\varphi}(\omega)$.
We make the realistic assumption that the deformations $\bd{\Theta}:\mathcal{C}\rightarrow \R^3$
of the shell are of the form
\[
\bd{\Theta}(\bd{\Phi}(x,z))=\bd{\psi}(x)+z\bd{a}_3(\bd{\psi})(x),\quad (x,z)\in \Omega,
\]
where 
\[
\bd{a}_3(\bd{\psi})(x):=\frac{\partial_1 \bd{\psi}(x)\wedge
\partial_2 \bd{\psi}(x) }{|\partial_1 \bd{\psi}(x)\wedge
\partial_2 \bd{\psi}(x)|}
\]
is the unit normal vector to the deformed midsurface $\hat{S}:=
\bd{\psi}(\omega)$. By letting
\[
\bd{\Psi}(x,z):=\bd{\Theta}\circ \bd{\Phi}(x,z)=\bd{\psi}(x)+z\bd{a}_3(\bd{\psi})(x),
\]
it follows that
\[
\det \nabla \bd{\Psi}(x,z)=\det\nabla \bd{\Theta}(\bd{\Phi}(x,z))\det \nabla \bd{\Phi}(x,z).
\]
Hence, in order to satisfy the condition $\det\nabla \bd{\Theta}(\bd{\Phi}(x,z))>0$, we require that
\[
\det \nabla \bd{\Phi}>0\quad \text{and}\quad \det \nabla \bd{\Psi}>0.
\]
Thus, since
\begin{align*}
\det \nabla \bd{\Psi}=
\left(1-\frac{z}{{R}_1(\bd{\psi})}\right)\left(1-\frac{z}{{R}_2(\bd{\psi})}\right)|\partial_1 \bd{\psi}\wedge
\partial_2 \bd{\psi}|
\end{align*}
where $1/{R}_1(\bd{\psi})$ and $1/{R}_2(\bd{\psi})$ are the principal curvatures of the deformed midsurface, we impose the following conditions
\[
\partial_1 \bd{\psi}\wedge
\partial_2 \bd{\psi}\neq \bd{0}\quad \text{and}\quad
\max_{\alpha \in \{1,2\}}\left|\frac{\varepsilon}{{R}_{\alpha}(\bd{\psi})}\right|<1.
\]
We denote by
\[
a(\bd{\psi}):=|\partial_1 \bd{\psi}\wedge \partial_2 \bd{\psi}|^2=\det(a_{\alpha \beta}(\bd{\psi})),
\]
where $a_{\alpha \beta}(\bd{\psi}):=\partial_{\alpha}\bd{\psi}\cdot
\partial_{\beta}\bd{\psi}$, and if $a(\bd{\psi})\neq 0$, we denote by
\[
H(\bd{\psi}):=\frac{1}{2}\left(\frac{1}{R_1(\bd{\psi})}+\frac{1}{R_2(\bd{\psi})}\right)\quad \text{and}\quad K(\bd{\psi}):=\frac{1}{R_1(\bd{\psi})R_2(\bd{\psi})}
\]
the mean and Gaussian curvatures. The principal curvatures
$1/R_1(\bd{\psi})$ and $1/R_2(\bd{\psi})$ are the two eigenvalues of the matrix
$(b_{\alpha}^{\beta}(\bd{\psi}))$ defined as
$b_{\alpha}^{\beta}(\bd{\psi}):=b_{\alpha \rho}(\bd{\psi})a^{\rho \beta}(\bd{\psi})$ with $b_{\alpha \beta}(\bd{\psi}):=-\partial_{\alpha}\bd{\psi}\cdot \partial_{\beta}\bd{a}_3(\bd{\psi})$ and
$(a^{\alpha \beta}(\bd{\psi})):=(a_{\alpha \beta}(\bd{\psi}))^{-1}$.
 
\begin{theorem}
Let $\omega$ be a domain in $\R^2$ and let 
$\gamma_0$ be a non-empty relatively open subset of $\gamma:=\partial \omega$.
For $\varepsilon>0$, $p\geqslant 2$ and $q>1$, we define the functional $I:\bd{\V}^{\varepsilon}\rightarrow \R\cup \{+ \infty\}$ by letting
\begin{align*}
\bd{\V}^{\varepsilon}:=\{ & \bd{\psi}\in W^{1,p}(\omega;\R^3); \quad
\sqrt{a( \bd{\psi})} \in L^q(\omega),\quad
a( \bd{\psi})\neq 0 \textnormal{ a.e. in }\omega,\\
&\bd{a}_3 (\bd{\psi}) \in W^{1,p}(\omega;\R^3), \quad \max\left\{\left|\varepsilon/R_{1}(\bd{\psi})\right|,\left|\varepsilon/R_{2}(\bd{\psi})\right|\right\}<1 \textnormal{ a.e. in }\omega,\\
 & \bd{\psi}= \bd{\varphi} \quad \text{and} \quad \bd{a}_3 (\bd{\psi})  =\bd{a}_3 \text{ }\mathrm{d}\gamma\text{-a.e. in } \gamma_0  \}
\end{align*}
and for each $\bd{\psi}\in \bd{\V}^{\varepsilon}$,
\[
I(\bd{\psi}):=\int_{\omega}W(x,\bd{\psi})\, \dx-L(\bd{\psi},\bd{a}_3 (\bd{\psi})),
\]
where $L$ is a continuous linear form over the space $W^{1,p}(\omega;\R^3)\times W^{1,p}(\omega;\R^3)$ and 
$W:\omega \times \bd{\V}^{\varepsilon} \rightarrow \R$ is a function with the following properties:

\textnormal{(a)} Polyconvexity: For almost all $x \in \omega$, there exists a convex function $\mathbb{W}(x,\cdot):\bd{\M}\rightarrow \R$ where
\[
\bd{\M}:=\{( \bd{A},
\bd{B},a,b,c) \in (\mathbb{M}^{3\times 2})^2 \times \R^3; \,
 a-|b|>0 \text{ and } a-2|b| +c >0 \}
\]
such that for almost all $x \in \omega$
\[
 W(x,\bd{\psi})=
\mathbb{W}\Big(x,\nabla \bd{\psi}(x),
\nabla \bd{a}_{3}(\bd{\psi})(x),
\big(1,\varepsilon H(\bd{\psi}(x)),\varepsilon^2 K(\bd{\psi}(x))\big)\sqrt{a(\bd{\psi}(x))}\Big).
\]

\textnormal{(b)} Measurability: The function $\mathbb{W}(\cdot, \bd{A},
\bd{B},a,b,c):\omega \rightarrow \R$ is measurable for all $(\bd{A},
\bd{B},a,b,c)\in
\bd{\M}$.

\textnormal{(c)} Coerciveness: There exist constants $C_1>0$ and $C_2$ such that
\[
W(x,\bd{\psi})\geqslant C_1\{|\nabla \bd{\psi}|^p+|\nabla \bd{a}_{3}(\bd{\psi})|^p+a( \bd{\psi})^{q/2}\}+C_2
\]
for all $\bd{\psi} \in \bd{\V}^{\varepsilon}$ and almost all $x\in \omega$.

\textnormal{(d)} {Orientation-preserving condition}: 
\begin{align*}
&W(x,\bd{\psi})\rightarrow +\infty \, \text{as } \{1-2\varepsilon H(\bd{\psi}(x))+\varepsilon^2 K(\bd{\psi}(x))\}\sqrt{a(\bd{\psi}(x))}\rightarrow   0^+\\
\text{and }&W(x,\bd{\psi})\rightarrow +\infty \, \text{as } \{1+2\varepsilon H(\bd{\psi}(x))+\varepsilon^2 K(\bd{\psi}(x))\}\sqrt{a(\bd{\psi}(x))}\rightarrow   0^+
\end{align*}
for all $\bd{\psi} \in \bd{\V}^{\varepsilon}$ and almost all $x\in \omega$.

Assume that $\inf_{\bd{\psi}\in \bd{\V}^{\varepsilon}}I(\bd{\psi})<+\infty$, then there exists at least one function $\bd{\eta}\in \bd{\V}^{\varepsilon}$ such that
\[
I(\bd{\eta})=\inf_{\bd{\psi}\in \bd{\V}^{\varepsilon}}I(\bd{\psi}).
\]
\end{theorem}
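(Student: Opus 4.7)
The strategy is the direct method of the calculus of variations, following Ball~\cite{Ball77}. Take a minimizing sequence $(\bd{\psi}_n) \subset \bd{\V}^{\varepsilon}$ with $I(\bd{\psi}_n)\to \inf I$. Using the continuity of $L$ on $W^{1,p}(\omega;\R^3)\times W^{1,p}(\omega;\R^3)$, the coerciveness hypothesis (c), the Dirichlet data on $\gamma_0$ (which has positive $\mathrm{d}\gamma$-measure), and a Poincar\'e inequality, I obtain uniform bounds on $\|\bd{\psi}_n\|_{1,p}$, $\|\bd{a}_3(\bd{\psi}_n)\|_{1,p}$, and $\|\sqrt{a(\bd{\psi}_n)}\|_{L^q(\omega)}$. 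Extracting subsequences yields $\bd{\psi}_n \rightharpoonup \bd{\eta}$ in $W^{1,p}$, $\bd{a}_3(\bd{\psi}_n) \rightharpoonup \bd{A}_3$ in $W^{1,p}$, $\sqrt{a(\bd{\psi}_n)} \rightharpoonup \alpha$ in $L^q$, and, by the Rellich--Kondrachov theorem, $\bd{\psi}_n\to \bd{\eta}$ and $\bd{a}_3(\bd{\psi}_n)\to \bd{A}_3$ strongly in $L^p$ and pointwise a.e.

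The crux is identifying these weak limits. The three components of $\partial_1 \bd{\psi} \wedge \partial_2 \bd{\psi}$ are precisely the $2\times 2$ minors of the $3\times 2$ matrix $\nabla \bd{\psi}$, so Ball's weak continuity of minors (valid since $p\geq 2$) gives $\partial_1 \bd{\psi}_n \wedge \partial_2 \bd{\psi}_n \rightharpoonup \partial_1 \bd{\eta} \wedge \partial_2 \bd{\eta}$. Writing $\partial_1 \bd{\psi}_n \wedge \partial_2 \bd{\psi}_n = \sqrt{a(\bd{\psi}_n)}\, \bd{a}_3(\bd{\psi}_n)$ and passing to the limit (weak times strong) yields $\alpha\, \bd{A}_3 = \partial_1 \bd{\eta} \wedge \partial_2 \bd{\eta}$, hence $\alpha = \sqrt{a(\bd{\eta})}$ and $\bd{A}_3 = \bd{a}_3(\bd{\eta})$ on the set $\{a(\bd{\eta})\neq 0\}$. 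In the same spirit, $\sqrt{a(\bd{\psi}_n)}\,\varepsilon H(\bd{\psi}_n)$ and $\sqrt{a(\bd{\psi}_n)}\,\varepsilon^2 K(\bd{\psi}_n)$ reduce, after clearing the $a^{\alpha\beta}$ factors, to polynomials in the entries of $\nabla \bd{\psi}_n$ and $\nabla \bd{a}_3(\bd{\psi}_n)$, whose weak limits are identified by the same weak-continuity-of-minors argument combined with the strong convergences already available.

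Granted these identifications, the polyconvexity and measurability hypotheses (a)--(b) allow me to invoke the classical lower semicontinuity theorem for convex integrands, yielding $\int_\omega W(x,\bd{\eta})\,\dx \leq \liminf_n \int_\omega W(x,\bd{\psi}_n)\,\dx$. The weak continuity of $L$ on $W^{1,p}(\omega;\R^3)\times W^{1,p}(\omega;\R^3)$ then gives $I(\bd{\eta}) \leq \inf I$. Admissibility $\bd{\eta}\in \bd{\V}^{\varepsilon}$ follows next: the Dirichlet conditions pass to the limit by continuity of the trace, and the remaining requirements $a(\bd{\eta})\neq 0$ and $(1\pm 2\varepsilon H(\bd{\eta})+\varepsilon^2 K(\bd{\eta}))\sqrt{a(\bd{\eta})}>0$ a.e.\ are forced by the orientation-preserving hypothesis (d): if either expression vanished on a set of positive measure, Fatou's lemma applied along a suitable a.e.\ convergent subsequence would yield $\int_\omega W(x,\bd{\eta})\,\dx=+\infty$, contradicting $I(\bd{\eta}) \leq \inf I < +\infty$.

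I expect the hardest step to be the identification of $\bd{A}_3$ with $\bd{a}_3(\bd{\eta})$ and of the curvature expressions with their values at $\bd{\eta}$. The argument must be arranged carefully because the identification on $\{a(\bd{\eta})\neq 0\}$ and the a.e.\ non-degeneracy of $a(\bd{\eta})$ depend on each other; both rely on the polyconvex structure of hypothesis (a), on the weak continuity of minors (which is what the restriction $p\geq 2$ is there for), and on the orientation-preserving condition (d), which plays here the same role that the singular integrability condition $W(\bd{F})\to+\infty$ as $\det \bd{F}\to 0^+$ plays in the three-dimensional theory of Ball.
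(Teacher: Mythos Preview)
Your strategy is the paper's strategy---direct method, coerciveness plus Poincar\'e for bounds, weak continuity of minors for the identifications, and condition (d) for admissibility---but two points need sharpening.

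\textbf{The curvature identifications.} The scalar $\sqrt{a(\bd{\psi})}\,H(\bd{\psi})$ is \emph{not} a polynomial in the entries of $\nabla\bd{\psi}$ and $\nabla\bd{a}_3(\bd{\psi})$: writing out $H=\tfrac12 b_{\alpha\beta}a^{\alpha\beta}$ leaves a factor $1/\sqrt{a}$ that cannot be ``cleared''. What \emph{is} a null Lagrangian is the vector
\[
H(\bd{\psi})\,\partial_1\bd{\psi}\wedge\partial_2\bd{\psi}
=-\tfrac12\bigl(\partial_1\bd{\psi}\wedge\partial_2\bd{a}_3(\bd{\psi})+\partial_1\bd{a}_3(\bd{\psi})\wedge\partial_2\bd{\psi}\bigr),
\]
and likewise $K(\bd{\psi})\,\partial_1\bd{\psi}\wedge\partial_2\bd{\psi}=\partial_1\bd{a}_3(\bd{\psi})\wedge\partial_2\bd{a}_3(\bd{\psi})$. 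These are precisely the mixed $2\times2$ minors of the pair $(\nabla\bd{\psi},\nabla\bd{a}_3(\bd{\psi}))$ and are weakly continuous for $p\geqslant2$; the paper packages this via the symmetric bilinear form $[\bd{\varphi}_1,\bd{\varphi}_2]:=\tfrac12(\partial_1\bd{\varphi}_1\wedge\partial_2\bd{\varphi}_2+\partial_1\bd{\varphi}_2\wedge\partial_2\bd{\varphi}_1)$, rewritten as a distributional divergence. The scalars $H\sqrt{a}$ and $K\sqrt{a}$ are then recovered by dotting with the strongly convergent $\bd{a}_3(\bd{\psi}_n)$, exactly as you already do for $\sqrt{a}$ itself.

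\textbf{The order of admissibility and lower semicontinuity.} You establish $I(\bd{\eta})\leqslant\inf I<+\infty$ first and use it to rule out degeneracy. But $I$ and $W(x,\cdot)$ are defined only for $\bd{\psi}\in\bd{\V}^{\varepsilon}$, and $\mathbb{W}(x,\cdot)$ only on the \emph{open} set $\bd{\M}$; if the limit tuple sits on $\partial\bd{\M}$, the inequality $I(\bd{\eta})\leqslant\inf I$ is not yet meaningful, and your Fatou step cannot yield ``$\int_\omega W(x,\bd{\eta})\,\dx=+\infty$'' because $W(x,\bd{\eta})$ is undefined there. The paper proves admissibility \emph{before} lower semicontinuity: on a set $A$ where $(1-2\varepsilon H(\bd{\eta})+\varepsilon^2K(\bd{\eta}))\sqrt{a(\bd{\eta})}=0$, the nonnegative sequence $(1-2\varepsilon H(\bd{\eta}^{m})+\varepsilon^2K(\bd{\eta}^{m}))\sqrt{a(\bd{\eta}^{m})}$ converges weakly in $L^q(A)$ to $0$, hence (being nonnegative) strongly in $L^1(A)$, hence a.e.\ along a subsequence; then (d) gives $W(x,\bd{\eta}^{m})\to+\infty$ a.e.\ on $A$, and Fatou yields $\liminf_m\int_A W(x,\bd{\eta}^{m})\,\dx=+\infty$, contradicting the boundedness of $I(\bd{\eta}^{m})$---no reference to $W(x,\bd{\eta})$ is needed. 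Only after $\bd{\eta}\in\bd{\V}^{\varepsilon}$ is secured does the paper run the lower-semicontinuity step (via the Banach--Saks--Mazur theorem rather than an off-the-shelf result, since $\bd{\M}$ is open and $\mathbb{W}$ need not extend to its boundary).
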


\begin{proof}
(i) \emph{The integrals $\int_{\omega}W(x,\bd{\psi})\, \dx$ are well defined for all $\bd{\psi}\in \bd{\V}^{\varepsilon}$. } First, we note that the set $\bd{\M}$ is a convex open subset of $(\mathbb{M}^{3\times 2})^2 \times \R^3$. Furthermore, each $\bd{\psi} \in {\bd{\V}^{\varepsilon}}$ satisfies
$a(\bd{\psi}(x))>0$ and
$| \varepsilon / R_{\alpha}(\bd{\psi}(x))|<1$, then  
for almost all $x\in \omega$, 
\[
\big(\nabla \bd{\psi}(x),
\nabla \bd{a}_{3}(\bd{\psi})(x),
\big(1,\varepsilon H(\bd{\psi}(x)),\varepsilon^2 K(\bd{\psi}(x))\big)\sqrt{a(\bd{\psi}(x))}\big)\in \bd{\mathrm{M}}.
\]
In addition, for almost all $x\in \omega$, the function $\mathbb{W}(x,\cdot):
\bd{\M}\rightarrow \R$ is continuous and for all
$(\bd{A},
\bd{B},a,b,c)\in
\bd{\M}$, the function $\mathbb{W}(\cdot, \bd{A},
\bd{B},a,b,c):\omega \rightarrow \R$ is measurable. Hence,
$\mathbb{W}:\omega \times \bd{\M} \rightarrow \R$ is a Carath\'eodory function, and thus the function 
\[
x\in \omega \rightarrow \mathbb{W}(x,\nabla \bd{\psi}(x),
\nabla \bd{a}_{3}(\bd{\psi})(x),
\alpha(x),\beta(x),\gamma(x))\in \R
\]
with
$\alpha(x):=\sqrt{a(\bd{\psi}(x))}$, $\beta(x):=\varepsilon H(\bd{\psi}(x))\alpha(x)$ and 
$\gamma(x):=\varepsilon^2K(\bd{\psi}(x))\alpha(x)$
is measurable for each $\bd{\psi} \in {\bd{\V}^{\varepsilon}}$. The function 
$W$ being in addition bounded from below (by the coerciveness inequality (c)), the integral
\[
\int_{\omega}W(x,\bd{\psi})\, \dx=\int_{\omega}
\mathbb{W}(x,\nabla \bd{\psi}(x),
\nabla \bd{a}_{3}(\bd{\psi})(x),
\alpha(x),\beta(x),\gamma(x))\, \dx
\]
is therefore a well-defined extended real number in the interval
$[C_2 \text{ area }\omega,+\infty]$ for each $\bd{\psi} \in {\bd{\V}^{\varepsilon}}$.
\medskip

(ii) \emph{We find a lower bound for $I(\bd{\psi})$ when 
$\bd{\psi} \in {\bd{\V}^{\varepsilon}}$}.

From the assumed coerciveness (c) of the function $W$ and the assumed continuity of the linear form $L$, we infer that there exists a constant $C_3>0$ such that
\begin{align*}
I(\bd{\psi})\geqslant C_1\int_{\omega}
\{|\nabla \bd{\psi}|^p&+|\nabla \bd{a}_{3}(\bd{\psi})|^p+a(\bd{\psi})^{q/2}\}\, \dx+C_2 \text{ area }\omega \\
&-C_3(\| \bd{\psi}\|_{1,p}+\| \bd{a}_3(\bd{\psi})\|_{1,p})\text{ for all }\bd{\psi} \in {\bd{\V}^{\varepsilon}}.
\end{align*}
Combining the boundary conditions $
\bd{\psi}= \bd{\varphi}$ and $\bd{a}_3 (\bd{\psi})  =\bd{a}_3$
on $\gamma_0$ with the generalized Poincar\'e inequality, we thus conclude that there exist constants $C_4>0$ and $C_5$ such that
\[
I(\bd{\psi})\geqslant C_4\{
\| \bd{\psi}\|^p_{1,p}+\|\bd{a}_{3}(\bd{\psi})\|^p_{1,p}+\|\sqrt{a(\bd{\psi})}\|^{q}_{q}\}+C_5\text{ for all }\bd{\psi} \in {\bd{\V}^{\varepsilon}}.
\]

(iii) \emph{
We show that if $(\bd{\eta}^k)$ is a sequence with $\bd{\eta}^k
\in {\bd{\V}^{\varepsilon}}$ for all $k$ for which there exist
$\bd{\eta}\in W^{1,p}(\omega;\R^3)$, $\bd{\kappa}\in W^{1,p}(\omega;\R^3)$,  $(\bd{\xi}_1,\bd{\xi}_2,\bd{\xi}_3)\in (L^{q}(\omega;\R^3))^3$ and $(\alpha_1,\alpha_2,\alpha_3)\in (L^{q}(\omega))^3$ such that }
\begin{align*}
&\bd{\eta}^k \rightharpoonup \bd{\eta} \text{ in }W^{1,p}(\omega;\R^3),
\quad \bd{a}_3(\bd{\eta}^k) \rightharpoonup \bd{\kappa} \text{ in }W^{1,p}(\omega;\R^3),\\
&\partial_1\bd{\eta}^k \wedge \partial_2\bd{\eta}^k \rightharpoonup \bd{\xi}_1 \text{ in }L^{q}(\omega;\R^3), \quad \sqrt{a(\bd{\eta}^k)} \rightharpoonup \alpha_1 \text{ in }L^{q}(\omega), \\
&H(\bd{\eta}^k)\partial_1\bd{\eta}^k \wedge \partial_2\bd{\eta}^k \rightharpoonup \bd{\xi}_2 \text{ in }L^{q}(\omega;\R^3), \quad
H(\bd{\eta}^k)\sqrt{a(\bd{\eta}^k)} \rightharpoonup \alpha_2 \text{ in }L^{q}(\omega),\\
&K(\bd{\eta}^k)\partial_1\bd{\eta}^k \wedge \partial_2\bd{\eta}^k \rightharpoonup \bd{\xi}_3 \text{ in }L^{q}(\omega;\R^3), \quad
K(\bd{\eta}^k)\sqrt{a(\bd{\eta}^k)} \rightharpoonup \alpha_3 \text{ in }L^{q}(\omega),
\end{align*}
\emph{then almost everywhere in $\omega$}
\begin{align*}
&\bd{\kappa}=\bd{a}_3 (\bd{\eta}),\quad \max\{\left|\varepsilon /R_{1}(\bd{\eta})\right|,\left|\varepsilon /R_{2}(\bd{\eta})\right|\}\leqslant 1, \\ 
&\bd{\xi}_1=\partial_1\bd{\eta}\wedge \partial_2\bd{\eta},\quad
\bd{\xi}_2=H(\bd{\eta})\partial_1\bd{\eta}\wedge \partial_2\bd{\eta},\quad
\bd{\xi}_3=K(\bd{\eta})\partial_1\bd{\eta}\wedge \partial_2\bd{\eta},\\
& \alpha_1=\sqrt{a(\bd{\eta})},\quad \alpha_2=H(\bd{\eta})\sqrt{a(\bd{\eta})} \quad \text{and} \quad
\alpha_3=K(\bd{\eta})\sqrt{a(\bd{\eta})}.
\end{align*}
To prove this assertion, we begin by showing that $\bd{\kappa}=\bd{a}_3 (\bd{\eta})$. Using the Rellich-Kondra$\check{\text{s}}$ov compact imbedding theorem
$W^{1,p}(\omega;\R^3)\Subset L^r(\omega;\R^3)$ for all $r$ with $1\leqslant r < \infty$, we infer that
\[
\bd{a}_3(\bd{\eta}^k)\rightarrow \bd{\kappa}  
\text{ in }L^{p'}(\omega;\R^3),\quad \frac{1}{p}+\frac{1}{p'}=1,
\text{ and } \bd{a}_3(\bd{\eta}^k)\rightarrow \bd{\kappa}  
\text{ in }L^{2}(\omega;\R^3).
\]
Hence
$
\partial_{\alpha}\bd{\eta}^k\cdot \bd{a}_3(\bd{\eta}^k)\rightharpoonup
\partial_{\alpha}\bd{\eta}\cdot \bd{\kappa}$ in $L^{1}(\omega)$ and 
$|\bd{a}_3(\bd{\eta}^k)|^2\rightarrow |\bd{\kappa}|^2$ in $L^{1}(\omega)$.
Since for all $k$, $\partial_{\alpha}\bd{\eta}^k\cdot \bd{a}_3(\bd{\eta}^k)=0$ and $|\bd{a}_3(\bd{\eta}^k)|=1$, it follows that
$
\partial_{\alpha}\bd{\eta}\cdot \bd{\kappa}=0$ and  $
|\bd{\kappa}|=1$. In order to prove that $\bd{\kappa}=\bd{a}_3(\bd{\eta})$, it remains to show that
\[
\partial_1 \bd{\eta}\wedge \partial_2 \bd{\eta}\cdot \bd{\kappa}\geqslant 0 \text{ a.e. on }\omega.
\]
To this end, we define 
for all $\bd{\varphi}_1 \in W^{1,p}(\omega;\R^3)$ and all
$\bd{\varphi}_2 \in W^{1,p}(\omega;\R^3)$
\begin{align*}
\left[ \bd{\varphi}_1,\bd{\varphi}_2\right]:=&\frac{1}{2}\left(\partial_1 \bd{\varphi}_1\wedge \partial_2 \bd{\varphi}_2+
 \partial_1 \bd{\varphi}_2 \wedge \partial_2 \bd{\varphi}_1\right) \\
 =& \frac{1}{4}\left\{
\partial_1(\bd{\varphi}_1\wedge \partial_2\bd{\varphi}_2+
\bd{\varphi}_2\wedge \partial_2 \bd{\varphi}_1)+\partial_2(\partial_1 \bd{\varphi}_1\wedge \bd{\varphi}_2+\partial_1 \bd{\varphi}_2\wedge
\bd{\varphi}_1) 
\right\}.
\end{align*}
Hence, if $(\bd{\varphi}_1^k,\bd{\varphi}_2^k)$ is a sequence of
$W^{1,p}(\omega;\R^3)$, $p\geqslant 2$,
which converges weakly to $(\bd{l}_1,\bd{l}_2)\in 
W^{1,p}(\omega;\R^3)$, then
$\left[ \bd{\varphi}_1^k,\bd{\varphi}_2^k\right] \rightharpoonup
\left[ \bd{l}_1,\bd{l}_2\right]$ in $\mathcal{D}'(\omega;\R^3)$.
By applying this result to the sequence $(\bd{\eta}^k)$ which converges weakly to $\bd{\eta}$ in
$W^{1,p}(\omega;\R^3)$, it follows that
\begin{align*}
[\bd{\eta}^k,\bd{\eta}^k]=\partial_1 \bd{\eta}^k\wedge \partial_2 \bd{\eta}^k \rightharpoonup [\bd{\eta},\bd{\eta}]=\partial_1 \bd{\eta}\wedge \partial_2 \bd{\eta}\text{ in }\mathcal{D}'(\omega;\R^3)
\end{align*}
Hence
$\bd{\xi}_1=\partial_1 \bd{\eta}\wedge \partial_2 \bd{\eta}$ and $\partial_1 \bd{\eta}^k\wedge \partial_2 \bd{\eta}^k
\rightharpoonup \partial_1 \bd{\eta}\wedge \partial_2 \bd{\eta}$ in $L^q(\omega;\R^3)$.
Then
\[
\sqrt{a(\bd{\eta}^k)}=\partial_1 \bd{\eta}^k\wedge \partial_2 \bd{\eta}^k\cdot 
\bd{a}_3(\bd{\eta}^k)\rightharpoonup 
\partial_1 \bd{\eta}\wedge \partial_2 \bd{\eta}\cdot 
\bd{\kappa} \text{ in } L^1(\omega).
\]
Since for all $k$, $\partial_1 \bd{\eta}^k\wedge \partial_2 \bd{\eta}^k\cdot 
\bd{a}_3(\bd{\eta}^k)> 0$
then
$
\partial_1 \bd{\eta}\wedge \partial_2 \bd{\eta}\cdot 
\bd{\kappa}\geqslant 0$ a.e. in $\omega$.
Combining the following three relations, 
\[
\partial_\alpha \bd{\eta}\cdot \bd{\kappa}=0,\quad |\bd{\kappa}|=1, \quad \text{and}\quad 
\partial_1 \bd{\eta}\wedge \partial_2 \bd{\eta}\cdot 
\bd{\kappa}\geqslant 0 \text{ a.e. in }\omega,
\]
we infer that
\[
\bd{\kappa}=\frac{\partial_1 \bd{\eta} \wedge \partial_2 \bd{\eta}}{|\partial_1 \bd{\eta} \wedge \partial_2 \bd{\eta}|}=\bd{a}_3(\bd{\eta})\quad \text{and}\quad \alpha_1=\partial_1 \bd{\eta}\wedge \partial_2 \bd{\eta}\cdot 
\bd{\kappa}=\sqrt{a(\bd{\eta})}.
\]
Similarly, since 
$(\bd{\eta}^k,\bd{a}_3(\bd{\eta}^k))\rightharpoonup(\bd{\eta},\bd{a}_3(\bd{\eta}))$ in $W^{1,p}(\omega;\R^3)$,
it follows that
\begin{align*}
&[\bd{\eta}^k,\bd{a}_3(\bd{\eta}^k)]=-H(\bd{\eta}^k)\partial_1 \bd{\eta}^k\wedge \partial_2 \bd{\eta}^k \rightharpoonup [\bd{\eta},\bd{a}_3(\bd{\eta})]=-H(\bd{\eta})\partial_1 \bd{\eta}\wedge \partial_2 \bd{\eta},\\
&[\bd{a}_3(\bd{\eta}^k),\bd{a}_3(\bd{\eta}^k)] =K(\bd{\eta}^k)\partial_1 \bd{\eta}^k\wedge \partial_2 \bd{\eta}^k\rightharpoonup [\bd{a}_3(\bd{\eta}),\bd{a}_3(\bd{\eta})]=K(\bd{\eta})\partial_1 \bd{\eta}\wedge \partial_2 \bd{\eta}
\end{align*}
in $\mathcal{D}'(\omega;\R^3)$. Hence $\bd{\xi}_2=H(\bd{\eta})\partial_1 \bd{\eta}\wedge \partial_2 \bd{\eta}$, $\bd{\xi}_3=K(\bd{\eta})\partial_1 \bd{\eta}\wedge \partial_2 \bd{\eta}$ and
\begin{align*}
&H(\bd{\eta}^k)\partial_1 \bd{\eta}^k\wedge \partial_2 \bd{\eta}^k
\cdot \bd{a}_3(\bd{\eta}^k)
\rightharpoonup 
H(\bd{\eta})\partial_1 \bd{\eta}\wedge \partial_2 \bd{\eta}\cdot 
\bd{a}_3(\bd{\eta})=H(\bd{\eta})\sqrt{a(\bd{\eta})},\\
&K(\bd{\eta}^k)\partial_1 \bd{\eta}^k\wedge \partial_2 \bd{\eta}^k
\cdot \bd{a}_3(\bd{\eta}^k)
\rightharpoonup 
K(\bd{\eta})\partial_1 \bd{\eta}\wedge \partial_2 \bd{\eta}\cdot 
\bd{a}_3(\bd{\eta})=K(\bd{\eta})\sqrt{a(\bd{\eta})}
\end{align*}
in $L^1(\omega)$. Then
$\alpha_2=H(\bd{\eta})\sqrt{a(\bd{\eta})}$ and $\alpha_3=
K(\bd{\eta})\sqrt{a(\bd{\eta})}$.

It remains to show that for all $\alpha\in\{1,2\}$,
$
|\varepsilon/R_{\alpha}(\bd{\eta})|\leqslant 1
\text{ a.e. in }\omega.
$
Combining all the previous relations leads to the following weak convergence in $L^q(\omega)$, for all $d\in\{-1,1\}$,
\begin{align*}
(1-d\varepsilon H(\bd{\eta}^k))\sqrt{a(\bd{\eta}^k)}
&\rightharpoonup (1-d\varepsilon H(\bd{\eta}))\sqrt{a(\bd{\eta})}, \\
(1-2d\varepsilon H(\bd{\eta}^k)+\varepsilon ^2 K(\bd{\eta}^k))\sqrt{a(\bd{\eta}^k)}&\rightharpoonup
(1-2d\varepsilon H(\bd{\eta})+\varepsilon ^2 K(\bd{\eta}))\sqrt{a(\bd{\eta})}.
\end{align*}
Since for all $k$ and all $\alpha\in\{1,2\}$,
$\sqrt{a(\bd{\eta}^k)}>0$ and $|\varepsilon/R_{\alpha}(\bd{\eta}^k)|< 1$ a.e. in $\omega$,
then for all $k$ and all $d\in\{-1,1\}$,
$
(1-d\varepsilon H(\bd{\eta}^k))\sqrt{a(\bd{\eta}^k)}>0 $ and 
$
(1-2d\varepsilon H(\bd{\eta}^k)+\varepsilon ^2 K(\bd{\eta}^k))\sqrt{a(\bd{\eta}^k)}>0$  a.e. in $\omega$,
then by passing to the weak limit in $L^q(\omega)$, it follows that for all
$d\in\{-1,1\}$,
$(1-d\varepsilon H(\bd{\eta}))\sqrt{a(\bd{\eta})}\geqslant 0$ and $(1-2d\varepsilon H(\bd{\eta})+\varepsilon ^2 K(\bd{\eta}))\sqrt{a(\bd{\eta})}\geqslant 0$ a.e. in $\omega$. 
Hence for all $\alpha\in \{1,2\}$
$
|\varepsilon/R_{\alpha}(\bd{\eta})|\leqslant 1$
a.e. in $\omega$.

(iv) \emph{Let $(\bd{\eta}^k)$ be an infimizing sequence for the functional $I$, i.e., a sequence that satisfies}
\[
\bd{\eta}^k\in {\bd{\V}^{\varepsilon}} \text{ for all }k, \quad \text{and} \quad \lim_{k\rightarrow \infty}I(\bd{\eta}^k)=\inf_{\bd{\psi}\in {\bd{\V}^{\varepsilon}}}I(\bd{\psi}).
\]
By assumption, $\inf_{\bd{\psi}\in {\bd{\V}^{\varepsilon}}}I(\bd{\psi})<+\infty$, and thus, by part (ii), the sequence $(\bd{\eta}^k,\bd{a}_3(\bd{\eta}^k))$ is bounded in $(W^{1,p}(\omega;\R^3))^2$ and the sequence $\sqrt{a(\bd{\eta}^k)}$ is  bounded in $L^{q}(\omega)$. 
Since
\[
\sqrt{a(\bd{\eta}^k)}=\partial_1\bd{\eta}^k\wedge \partial_2\bd{\eta}^k\cdot \bd{a}_3(\bd{\eta}^k)=|\partial_1\bd{\eta}^k\wedge \partial_2\bd{\eta}^k|
\]
we infer that the sequence $(\partial_1\bd{\eta}^k\wedge \partial_2\bd{\eta}^k)$ is bounded in $L^{q}(\omega;\R^3)$.
As the sequences 
$(1/R_1(\bd{\eta}^k))$ and $(1/R_2(\bd{\eta}^k))$
are bounded in $L^{\infty}(\omega)$, it follows that the sequences
$(H(\bd{\eta}^k)\partial_1\bd{\eta}^k\wedge \partial_2\bd{\eta}^k)$ and $(K(\bd{\eta}^k)\partial_1\bd{\eta}^k\wedge \partial_2\bd{\eta}^k)$ are bounded in $L^q(\omega;\R^3)$ on the one hand and on the other hand that the sequences 
$H(\bd{\eta}^k)\sqrt{a(\bd{\eta}^k)})$ and 
$(K(\bd{\eta}^k)\sqrt{a(\bd{\eta}^k)})$ are bounded in $L^q(\omega)$.

Hence, 
there exists a subsequence $(\bd{\eta}^\ell,\bd{a}_3(\bd{\eta}^\ell))$ that converges weakly to an element $( \bd{\eta},\bd{\kappa})$
in $W^{1,p}(\omega;\R^3)$. There exist also six other subsequences
\[
\left(\partial_1\bd{\eta}^\ell\wedge \partial_2\bd{\eta}^\ell\right),\quad
\left(H(\bd{\eta}^\ell)\partial_1\bd{\eta}^\ell\wedge \partial_2\bd{\eta}^\ell\right),\quad \left(K(\bd{\eta}^\ell)\partial_1\bd{\eta}^\ell\wedge \partial_2\bd{\eta}^\ell\right)
\] 
which converge weakly to $\bd{\xi}_1$, $\bd{\xi}_2$,
$\bd{\xi}_3$ in $L^q(\omega;\R^3)$ respectively and
\[
(\sqrt{a(\bd{\eta}^\ell)}),\quad (H(\bd{\eta}^\ell)\sqrt{a(\bd{\eta}^\ell)}),\quad (K(\bd{\eta}^\ell)\sqrt{a(\bd{\eta}^\ell)})
\]
which converge weakly to $\alpha_1$, $\alpha_2$, $\alpha_3$ in $L^q(\omega)$ respectively. Then, by (iii), we infer that for all $\alpha \in \{1,2\}$,
$|\varepsilon/R_{\alpha}(\bd{\eta})|\leqslant 1$ a.e. in $\omega$,
$\bd{a}_3(\bd{\eta})\in W^{1,p}(\omega;\R^3)$ and $\sqrt{a(\bd{\eta})}
\text{ in }L^{q}(\omega)$.
In order to prove that $\bd{\eta} \in \bd{\V}^{\varepsilon}$, it remains to show that
$\bd{\eta}|_{\gamma_0}=\bd{\varphi}$,
$\bd{a}_3(\bd{\eta})|_{\gamma_0}=\bd{a}_3$,
$\partial_1\bd{\eta}\wedge \partial_2\bd{\eta}\neq 0$ a.e. in $\omega$ and for all $\alpha \in \{1,2\}$, $|\varepsilon/R_{\alpha}(\bd{\eta)}|\neq 1$
a.e. in $\omega$. Since the trace operator from $W^{1,p}(\omega)$ into $L^p(\gamma_0)$ is continuous with respect to the strong topologies of both spaces, it remains so with respect to the weak topologies of both spaces. Hence, we infer from the weak convergence
$
\bd{\eta}^\ell \rightharpoonup \bd{\eta}$ and $
\bd{a}_3(\bd{\eta}^\ell) \rightharpoonup \bd{a}_3(\bd{\eta})$ in $W^{1,p}(\omega;\R^3)$
that
$\bd{\eta}^\ell|_{\gamma_0}\rightarrow  \bd{\eta}|_{\gamma_0}$ and $
\bd{a}_3(\bd{\eta}^\ell)|_{\gamma_0}\rightarrow
 \bd{a}_3(\bd{\eta})|_{\gamma_0}$ in $L^p(\gamma_0;\R^3)$
and thus 
$\bd{\eta}|_{\gamma_0}=\bd{\varphi}$ and $
\bd{a}_3(\bd{\eta})|_{\gamma_0}=\bd{a}_3$
since $\bd{\eta}^\ell|_{\gamma_0}=\bd{\varphi}$ and $\bd{a}_3(\bd{\eta}^\ell)|_{\gamma_0}=\bd{a}_3$ for all $\ell$.

In order to prove that
$\partial_1\bd{\eta}\wedge \partial_2\bd{\eta}\neq 0$ a.e. in $\omega$ and for all $\alpha \in \{1,2\}$,    $|\varepsilon/R_{\alpha}(\bd{\eta)}|\neq  1$ a.e. in
$\omega$,
it suffices to show that for all $d\in \{-1,1\}$,
\[
(1-2d\varepsilon H(\bd{\eta}) +\varepsilon^2 K(\bd{\eta}))\sqrt{a(\bd{\eta})}\neq 0 \text{ a.e. in } \omega.
\]
Assume that $(1-2\varepsilon H(\bd{\eta}) +\varepsilon^2 K(\bd{\eta}))\sqrt{a(\bd{\eta})}=0$ on a subset $A$ of $\omega$ with $\dx $-meas $A>0$. Since
$(1-2\varepsilon H(\bd{\eta}^\ell) +\varepsilon^2 K(\bd{\eta}^\ell))\sqrt{a(\bd{\eta}^\ell)}>0$ a.e. on $A$ and
\[
(1-2\varepsilon H(\bd{\eta}^\ell) +\varepsilon^2 K(\bd{\eta}^\ell))\sqrt{a(\bd{\eta}^\ell)}\rightharpoonup
(1-2\varepsilon H(\bd{\eta}) +\varepsilon^2 K(\bd{\eta}))\sqrt{a(\bd{\eta})}
\]
in $L^q(\omega)$, then
\[
\int_A (1-2\varepsilon H(\bd{\eta}^\ell) +\varepsilon^2 K(\bd{\eta}^\ell))\sqrt{a(\bd{\eta}^\ell)} \dx\rightarrow 
\int_A (1-2\varepsilon H(\bd{\eta}) +\varepsilon^2 K(\bd{\eta}))\sqrt{a(\bd{\eta})}\, \dx 
\]
by the definition of weak convergence (the characteristic function of the set $A$ belongs to the dual space of $L^{q}(\omega)$), hence
\[
(1-2\varepsilon H(\bd{\eta}^\ell) +\varepsilon^2 K(\bd{\eta}^\ell))\sqrt{a(\bd{\eta}^\ell)}\rightarrow 0\text{ in }L^1(A).
\] 
Therefore there exists a subsequence $(\bd{\eta}^m)$ of
$(\bd{\eta}^\ell)$ such that
\[
(1-2\varepsilon H(\bd{\eta}^m (x)) +\varepsilon^2 K(\bd{\eta}^m (x)))\sqrt{a(\bd{\eta}^m (x))}\rightarrow 0 \text{ for almost all }x\in A.
\]
Consider next the sequence of measurable functions $(f^m)$ defined by
\[
f^m \colon x\in A \to f^m(x):=W(x,\bd{\eta}^m).
\]
Since $f^m \geqslant C_2$ for all $m$, can apply Fatou's lemma, 
which shows that
\[
\int_A \liminf_{m\rightarrow \infty} f^m(x)\, \dx \leqslant \liminf_{m\rightarrow \infty} \int_A f^m(x)\, \dx
\]
on the one hand. On the other hand, the behavior of the function
$W$ as 
\[
(1-2\varepsilon H(\bd{\eta} (x)) +\varepsilon^2 K(\bd{\eta} (x)))\sqrt{a(\bd{\eta} (x))}\rightarrow 0^+
\]
(assumption (d)) implies that
$
\liminf_{m\rightarrow \infty} f^m(x)=\lim_{m\rightarrow \infty}
W(x,\bd{\eta}^m)=+\infty$ for almost all $x\in A$
and thus
\[
\lim_{m\rightarrow \infty}\int_A f^m(x)\, \dx =\lim_{m\rightarrow \infty}\int_A
W(x,\bd{\eta}^m)\, \dx =+\infty.
\]
But this last relation contradicts the relation 
\[
\lim_{m\rightarrow \infty}I(\bd{\eta}^m)=\inf_{\bd{\psi}\in {\bd{\V}^{\varepsilon}}}I(\bd{\psi})<+\infty
\]
and the inequalities
\begin{align*}
I(\bd{\eta}^m)\geqslant \int_A W(x,\bd{\eta}^m)\, \dx
&+C_2 \text{ area }(\omega - A)\\
&-C_3(\|\bd{\eta}^m\|_{1,p}+
\|\bd{a}_3(\bd{\eta}^m)\|_{1,p})
\end{align*}
(a weakly convergent sequence is bounded). Hence 
\[
(1-2\varepsilon H(\bd{\eta}) +\varepsilon^2 K(\bd{\eta}))\sqrt{a(\bd{\eta})}\neq 0 \text{ a.e. in }\omega,  
\]
thus
$
\partial_1\bd{\eta}\wedge \partial_2\bd{\eta}\neq 0$  a.e. in $\omega$ and for all $\alpha \in \{1,2\}$,   $\varepsilon/R_{\alpha}(\bd{\eta)}\neq  1$
 a.e. in $\omega$.
We proceed in the same manner to prove that
$(1+2\varepsilon H(\bd{\eta}) +\varepsilon^2 K(\bd{\eta}))\sqrt{a(\bd{\eta})}\neq 0$ a.e. in $\omega$,
thus we infer in addition that for all $\alpha \in \{1,2\}$,
$\varepsilon/R_{\alpha}(\bd{\eta)}\neq  -1$
a.e. in $\omega$.
To sum up, we have proved that $\bd{\eta}\in \bd{\V}^{\varepsilon}$.

(v) \emph{Finally, we show that}
\[
\int_{\omega}W(x,\bd{\eta}) \, \dx\leqslant \liminf_{\ell \to \infty}
\int_{\omega}W(x,\bd{\eta}^{\ell})\, \dx.
\]
By the definition of the limit inferior, we must show that, given any subsequence $(\bd{\eta}^m)$ of $(\bd{\eta}^\ell)$ such that the sequence $(\int_{\omega}W(x,\bd{\eta}^{m})\, \dx)$ converges, then
\[
\int_{\omega}W(x,\bd{\eta}) \, \dx\leqslant \lim_{m \to \infty}
\int_{\omega}W(x,\bd{\eta}^{m})\, \dx.
\]
So, let us consider such a subsequence. Using the results of parts (iii), (iv) and the Banach-Saks-Mazur theorem, we infer that for each $m$, there exist integers $j(m)\geqslant m$ and numbers $\mu_t^m$, $m\leqslant t \leqslant j(m)$, such that
\begin{align*}
& \mu_t^m \geqslant 0,\quad \sum_{t=m}^{j(m)}\mu_t^m=1,\\
& \bd{D}^m:=\sum_{t=m}^{j(m)} \mu_t^m \left(\nabla \bd{\eta}^t,\nabla \bd{a}_3(\bd{\eta}^t),\left(1,\varepsilon H(\bd{\eta}^t),\varepsilon^2 K(\bd{\eta}^t)\right)\sqrt{a(\bd{\eta}^t)}\right)\\
&\qquad \qquad \qquad \qquad \underset{m\to \infty}{\longrightarrow}
\left(\nabla \bd{\eta},\nabla \bd{a}_3(\bd{\eta}),\left(1,\varepsilon H(\bd{\eta}),\varepsilon^2 K(\bd{\eta})\right)\sqrt{a(\bd{\eta})}\right)
\end{align*}
in $(L^p(\omega;\mathbb{M}^{3\times 2}))^2\times  (L^q(\omega))^3$. Hence there exists a subsequence $(\bd{D}^n)$ of $(\bd{D}^m)$ such that, for almost all $x \in \omega$,
\begin{align*}
& \sum_{t=n}^{j(n)}\mu_t^n \left(\nabla \bd{\eta}^t(x),\nabla \bd{a}_3(\bd{\eta}^t(x)),\left(1,\varepsilon H(\bd{\eta}^t(x)),\varepsilon^2 K(\bd{\eta}^t(x))\right)\sqrt{a(\bd{\eta}^t(x)})\right)\\
& \qquad \qquad \underset{n\to \infty}{\longrightarrow}
\left(\nabla \bd{\eta}(x),\nabla \bd{a}_3(\bd{\eta}(x)),\left(1,\varepsilon H(\bd{\eta}(x)),\varepsilon^2 K(\bd{\eta}(x))\right)\sqrt{a(\bd{\eta}(x)})\right).
\end{align*}
Since the function $\mathbb{W}(x,\cdot)$ is continuous on the set
\[
\bd{\M}:=\{( \bd{A},
\bd{B},a,b,c) \in (\mathbb{M}^{3\times 2})^2 \times \R^3, \,
a-|b|>0 \text{ and } a-2|b| +c >0 \}
\]
for almost all $x\in \omega$ and since $\bd{\eta}\in \bd{\V}^{\varepsilon}$ by part (iv), it follows that for almost all $x \in \omega$
\[
\left(\nabla \bd{\eta}(x),\nabla \bd{a}_3(\bd{\eta}(x)),\left(1,\varepsilon H(\bd{\eta}(x)),\varepsilon^2 K(\bd{\eta}(x))\right)\sqrt{a(\bd{\eta}(x)})\right)\in \bd{\M}
\]
and
\begin{align*}
W(x,\bd{\eta})&=\mathbb{W}\left(x,\nabla \bd{\eta}(x),\nabla \bd{a}_3(\bd{\eta}(x)),\left(1,\varepsilon H(\bd{\eta}(x)),\varepsilon^2 K(\bd{\eta}(x))\right)\sqrt{a(\bd{\eta}(x)})\right)\\
&=\lim_{n \to \infty}\mathbb{W}\left(x,\sum_{t=n}^{j(n)}\mu_t^n \left(\nabla \bd{\eta}^t(x),\nabla \bd{a}_3(\bd{\eta}^t(x)),\bd{v}(\bd{\eta}^t(x))\right)\right)
\end{align*}
where $\bd{v}(\bd{\eta}^t(x)):=(1,\varepsilon H(\bd{\eta}^t(x)),\varepsilon^2 K(\bd{\eta}^t(x)))\sqrt{a(\bd{\eta}^t(x)})$.
Using this relation, Fatou's lemma, and the assumed convexity of the function $\mathbb{W}(x,\cdot)$ for almost all $x \in \omega$, we next obtain, on the one hand,
\begin{align*}
\int_{\omega}W(x,\bd{\eta})\, \dx &\leqslant \liminf_{n \to \infty}\int_{\omega} \mathbb{W}\left(x,\sum_{t=n}^{j(n)}\mu_t^n \left(\nabla \bd{\eta}^t(x),\nabla \bd{a}_3(\bd{\eta}^t(x)),\bd{v}(\bd{\eta}^t(x))\right)\right)\, \dx\\
&\leqslant \liminf_{n \to \infty}\sum_{t=n}^{j(n)}\mu_t^n 
\int_{\omega} W(x,\bd{\eta}^t)\, \dx=\lim_{n \to \infty}
\int_{\omega}W(x,\bd{\eta}^n)\, \dx\\
&=\lim_{m \to \infty}\int_{\omega}W(x,\bd{\eta}^m)\, \dx.
\end{align*}
Since, on the other hand, $L(\bd{\eta},\bd{a}_3(\bd{\eta}))=\lim_{\ell \to \infty}L(\bd{\eta}^\ell,\bd{a}_3(\bd{\eta}^\ell))$ by definition of weak convergence, we have thus proved that
$
I(\bd{\eta})\leqslant \liminf_{\ell \to \infty}I(\bd{\eta}^\ell)$.

(vi) \emph{The function $\bd{\eta}$ is thus a solution of the minimization problem}, since $\bd{\eta}\in \bd{\V}^{\varepsilon}$ by parts (iii) and (iv), and since
\[
I(\bd{\eta})\leqslant \liminf_{\ell \to \infty}I(\bd{\eta}^\ell)=\inf_{\bd{\psi}\in \bd{\V}^{\varepsilon}}I(\bd{\psi})\quad \text{implies}\quad I(\bd{\eta})=\inf_{\bd{\psi}\in \bd{\V}^{\varepsilon}}I(\bd{\psi}).
\]
\end{proof}

As an example of polyconvex stored energy function , there is the Helfrich energy (see Helfrich \cite{Helfrich73}) given by
\[
W(\bd{\psi})=\left( \frac{k_c}{2} (2H(\bd{\psi})+c_0)^2+\overline{k}K(\bd{\psi})+\lambda \right) \sqrt{a(\bd{\psi})}
\]
used for modeling biomembranes, where $k_c>0$ and $\overline{k}\in \R$ denote bending rigidities, $c_0 \in \R$ stands for the spontaneous curvature and $\lambda\in \R$ is the surface tension.

\section{Stored energy functions for $\boldsymbol{G^1}$ shells}
\label{Example}

Let us first define a $G^1$ shell with thickness $2 \varepsilon >0$. This regularity 
has been first introduced in Anicic \cite{Anicic01, Anicic03}. The term $G^1$ stands for \emph{First-Order Geometric Continuity}. 

The midsurface of the reference configuration of a shell is denoted by $S:=\bd{\varphi}(\omega)$ where
\begin{equation}
\bd{\varphi} \in W^{1,\infty}(\omega;\R^3).
\label{hyp_reg1}
\end{equation}
The two vectors $\bd{a}_\alpha := \partial_{\alpha} \bd{\varphi}\in L^{\infty}(\omega;\R^3)$ span the tangent plane to the surface $S$. We suppose that $\bd{\varphi}$ satisfies the additional assumption
\begin{equation}
\underset{\omega}{\textnormal{ess inf }} |\bd{a}_1\wedge \bd{a}_2| >0 \textnormal{ and } 
\bd{a}_3:=\frac{\bd{a}_1\wedge \bd{a}_2}{|\bd{a}_1\wedge \bd{a}_2|} \in W^{1,\infty}(\omega;\R^3),
\label{hyp_reg2}
\end{equation}
where $\bd{a}_3$ is the unit normal vector to the surface $S$. 

The covariant components $a_{\alpha \beta}\in L^{\infty}(\omega)$, $b_{\alpha \beta}\in L^{\infty}(\omega)$ and  $c_{\alpha \beta}\in L^{\infty}(\omega)$ of the first, second and third fundamental forms of $S$ are respectively defined by
$a_{\alpha \beta}:=\bd{a}_{\alpha} \cdot \bd{a}_{\beta}$, 
$b_{\alpha \beta}:=-\bd{a}_{\alpha} \cdot \partial_{\beta}\bd{a}_{3}=-\bd{a}_{\beta} \cdot \partial_{\alpha}\bd{a}_{3}$ and 
$c_{\alpha \beta}:=\partial_{\alpha}\bd{a}_{3} \cdot \partial_{\beta}\bd{a}_{3}$.
The area
element along $S$ is $\sqrt{a}\,dx$, whith
\[
a:=\det(a_{\alpha \beta})=|\bd{a}_1 \wedge \bd{a}_2|^2.
\]
Since 
$a$ is uniformly bounded from below on $\omega$, 
the inverse of the matrix $(a_{\alpha \beta})$, which we denote $(a^{\alpha \beta})$, belongs to $L^{\infty}(\omega)$. The contravariant basis $\bd{a}^{\alpha}\in L^{\infty}(\omega;\R^3)$ is then defined by letting 
$
\bd{a}^{\alpha}=a^{\alpha \beta}\bd{a}_{\beta}
$
and then satisfy
$
\bd{a}^{\alpha}\cdot \bd{a}_{\beta}=\delta^{\alpha}_{\beta}$,
where $\delta^{\alpha}_{\beta}$ is the Kronecker symbol. 
The mixed components $b_{\alpha}^{ \beta}\in L^{\infty}(\omega)$ of
the second fundamental form are defined by
$
b_{\alpha}^{ \beta}:=b_{\alpha\rho}a^{\rho \beta}$.
The mean curvature $H\in L^{\infty}(\omega)$ and the Gaussian curvature $K \in L^{\infty}(\omega)$ are respectively defined by
\begin{align*}
H:=\frac{1}{2}(b_1^1+b_2^2)=\frac{1}{2}\left(\frac{1}{R_1}+\frac{1}{R_2}\right) \quad \text{and}\quad K:=b_1^1b_2^2-b_1^2b_2^1=\frac{1}{R_1R_2},
\end{align*}
where the invariants $1/R_{\alpha}$ are the principal curvatures of $S$.

The reference configuration of a shell with thickness $2\varepsilon >0$ is the set
\[
\{ \bd{\Phi}(x,z):=\bd{\varphi}(x)+z\bd{a}_3(x); \enspace (x,z)\in \Omega:=\omega \times (-\varepsilon,\varepsilon)\}.
\]
The tangent vectors are respectively defined by
\[
\bd{g}_{\alpha}(x,z):=\partial_{\alpha}\bd{\Phi}(x,z)=\bd{a}_{\alpha}(x)+z\partial_{\alpha}\bd{a}_3(x).
\] 
Then
\[
\det \nabla \bd{\Phi}(x,z)=\left(1-\frac{z}{R_1(x)} \right)\left(1-\frac{z}{R_2(x)} \right)\sqrt{a(x)}.
\]
In addition to the hypotheses (\ref{hyp_reg1})-(\ref{hyp_reg2}), 
we also impose that $\bd{\varphi}$ and $\varepsilon$ satisfy the following assumption:
\begin{equation}
\underset{\Omega}{\textnormal{ess inf }}\det \nabla \bd{\Phi}>0.
\label{hyp_reg3}
\end{equation}
The contravariant basis $\bd{g}^{\alpha}\in L^{\infty}(\Omega;\R^3)$ is defined by
$\bd{g}^{\alpha}\cdot \bd{g}_{\beta}=\delta^{\alpha}_{\beta}$.

To sum up, equivalently to the hypotheses (\ref{hyp_reg1})-(\ref{hyp_reg2})-(\ref{hyp_reg3}), we define a $G^1$ shell with thickness $2\varepsilon>0$ a shell whose midsurface $S:=\bd{\varphi}(\omega)$ satisfies $\bd{\varphi}\in G^1$ where
\begin{align*}
G^1:= \{  \bd{\varphi}\in W^{1,\infty}(\omega;\R^3);\enspace &
(\bd{a}_1\wedge \bd{a}_2)/|\bd{a}_1\wedge \bd{a}_2| \in W^{1,\infty}(\omega;\R^3),\\
& \underset{\omega}{\textnormal{ess inf }}|\bd{a}_1\wedge \bd{a}_2|>0, \enspace \underset{\alpha \in \{1,2\}}{\textnormal{max}}|\varepsilon /R_{\alpha}|_{\infty,\omega}<1 \}.
\end{align*}
This regularity allows us to take into account curvature discontinuities as well as  tangent plane continuity, even if the tangent vectors are not continuous. Hence, if we consider a surface which is defined via smooth patches, we are only led to match the unit normal vectors on the interfaces and not the tangent vectors. This makes for great versatility in practice. Moreover, this regularity does not involve any Christoffel symbols.

Let us now define a class of polyconvex stored energy functions for $G^1$ shells which satisfies a coerciveness inequality.

\begin{theorem}
Let 
\[
\bd{\mathrm{N}}:=\{(a,b,c) \in \R^3, \,
 a-|b|>0 \text{ and } a-2|b| +c >0 \}
\]
and $\Gamma :\omega \times \bd{\mathrm{N}} \rightarrow \R$ be a function such that $\Gamma(x,\cdot) : \bd{\mathrm{N}} \rightarrow \R$ is convex for almost all $x\in \omega$. Let 
$W:\omega\times \bd{\V}^{\varepsilon}\rightarrow \R$ be a stored energy function defined by
\begin{align*}
W(x,\bd{\psi})=\sum_{i=1}^R &\left\{a_i \, \mathrm{tr}\left(\bd{G}(x,\bd{\psi},u_i,v_i)^{\gamma_i/2}\right)+b_i \, \mathrm{tr}\left(\bd{G}(x,\bd{\psi},-u_i,w_i)^{\gamma_i/2}\right)\right\} \\
&\qquad \quad + \Gamma(x,\sqrt{a(\bd{\psi})},\varepsilon H(\bd{\psi})\sqrt{a(\bd{\psi})},\varepsilon^2 K(\bd{\psi})\sqrt{a(\bd{\psi})})
\end{align*}
where 
\[
\bd{G}(x,\bd{\psi},u,v):=\{a_{\alpha \beta}(\bd{\psi})-2u b_{\alpha \beta}(\bd{\psi})+u^2 c_{\alpha \beta}(\bd{\psi})\}\,\bd{g}^{\alpha} (x,v) \otimes \bd{g}^{\beta}(x,v)
\]
and $a_i>0$, $b_i>0$, $\gamma_i\geqslant 2$, $u_i\in \R$, $(v_i,w_i)\in [-\varepsilon,\varepsilon]^2$,  $1\leqslant i \leqslant R$.

Then the function $W$ is polyconvex and satisfies a coerciveness inequality of the form
\begin{align*}
W(x,\bd{\psi})\geqslant C\{ |\nabla \bd{\psi}|^{\gamma_{i_0}}&+|u_{i_0}|^{\gamma_{i_0}}| \nabla \bd{a}_{3}(\bd{\psi})|^{\gamma_{i_0}}\}\\
&+\Gamma(x,\sqrt{a(\bd{\psi})},\varepsilon H(\bd{\psi})\sqrt{a(\bd{\psi})},\varepsilon^2 K(\bd{\psi})\sqrt{a(\bd{\psi})})
\end{align*}
with a constant $C>0$ and ${\gamma_{i_0}}=\max_i\gamma_i$.
\end{theorem}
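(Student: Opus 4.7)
The plan is to recognise $\bd{G}(x,\bd\psi,u,v)$ as a rank-$\leqslant 2$ symmetric positive semidefinite $3\times 3$ matrix that factorises as $\bd{N}\bd{N}^{\mathrm{T}}$ with $\bd{N}$ affine in $(\nabla\bd\psi,\nabla\bd{a}_3(\bd\psi))$. This reduces the polyconvexity of the $\mathrm{tr}(\bd{G}^{\gamma/2})$ terms to the classical convexity of Schatten norms, and it reduces the coerciveness to a uniform lower bound on the reciprocal metric, furnished by the $G^1$ hypotheses.

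The starting algebraic observation is the following. Setting $\bd{F}_u:=\nabla\bd\psi+u\,\nabla\bd{a}_3(\bd\psi)\in\mathbb{M}^{3\times 2}$, a direct expansion gives $(\bd{F}_u^{\mathrm{T}}\bd{F}_u)_{\alpha\beta}=a_{\alpha\beta}(\bd\psi)-2u\,b_{\alpha\beta}(\bd\psi)+u^{2}c_{\alpha\beta}(\bd\psi)$. Writing $\bd{M}(x,v):=[\bd{g}^{1}(x,v)\,|\,\bd{g}^{2}(x,v)]\in\mathbb{M}^{3\times 2}$ (which is independent of $\bd\psi$), one obtains the factorisation $\bd{G}(x,\bd\psi,u,v)=\bd{N}\bd{N}^{\mathrm{T}}$ where $\bd{N}(x,\bd\psi,u,v):=\bd{M}(x,v)\bd{F}_u^{\mathrm{T}}\in\mathbb{M}^{3\times 3}$ is \emph{affine} in $(\bd{A},\bd{B}):=(\nabla\bd\psi,\nabla\bd{a}_3(\bd\psi))$ at fixed $x,u,v$.

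For polyconvexity, since the non-zero singular values of $\bd{N}$ and $\bd{N}^{\mathrm{T}}$ coincide, $\mathrm{tr}(\bd{G}^{\gamma/2})=\sum_i\sigma_i(\bd{N})^{\gamma}=\|\bd{N}\|_{S^{\gamma}}^{\gamma}$, the $\gamma$-th power of the Schatten $\gamma$-norm. For $\gamma\geqslant 2\geqslant 1$ this is convex in $\bd{N}$ (the Schatten $\gamma$-norm is a norm, hence convex, and $t\mapsto t^{\gamma}$ is convex and non-decreasing on $[0,\infty)$), hence convex in $(\bd{A},\bd{B})$ by affine composition. Defining
\[
\mathbb{W}(x,\bd{A},\bd{B},a,b,c):=\sum_{i=1}^{R}\bigl\{a_{i}\|\bd{M}(x,v_i)(\bd{A}+u_i\bd{B})^{\mathrm{T}}\|_{S^{\gamma_i}}^{\gamma_i}+b_{i}\|\bd{M}(x,w_i)(\bd{A}-u_i\bd{B})^{\mathrm{T}}\|_{S^{\gamma_i}}^{\gamma_i}\bigr\}+\Gamma(x,a,b,c),
\]
the separation of the $(\bd{A},\bd{B})$- and $(a,b,c)$-dependences yields a function convex on $\bd{\M}$ which, by construction, coincides with $W(x,\bd\psi)$ at the physical argument, establishing property (a) of Theorem~1.

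For coerciveness, the matrix $\bd{D}(x,v):=\bd{M}(x,v)^{\mathrm{T}}\bd{M}(x,v)\in\mathbb{M}^{2\times 2}$ is uniformly symmetric positive definite on $\omega\times[-\varepsilon,\varepsilon]$: by (\ref{hyp_reg1})--(\ref{hyp_reg3}), $\bd{g}_{\alpha}\in L^{\infty}$ and $\det\nabla\bd{\Phi}\geqslant c_0>0$ give a uniform lower bound on $\lambda_{\min}(\bd{D})$. The non-zero eigenvalues of $\bd{G}=\bd{M}\bd{F}_u^{\mathrm{T}}\bd{F}_u\bd{M}^{\mathrm{T}}$ coincide with those of $\bd{F}_u\bd{D}\bd{F}_u^{\mathrm{T}}$, so the Loewner monotonicity $\bd{F}_u\bd{D}\bd{F}_u^{\mathrm{T}}\succeq \lambda_{\min}(\bd{D})\,\bd{F}_u\bd{F}_u^{\mathrm{T}}$ combined with the equivalence of $\ell^{\gamma}$ and $\ell^{2}$ norms on $\R^{2}$ (valid for $\gamma\geqslant 2$) yields $\mathrm{tr}(\bd{G}^{\gamma/2})\geqslant c|\bd{F}_u|^{\gamma}$ with $c>0$ uniform in $x,\bd\psi,u,v$. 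Writing $2\nabla\bd\psi=\bd{F}_{u_{i_0}}+\bd{F}_{-u_{i_0}}$ and $2u_{i_0}\nabla\bd{a}_3(\bd\psi)=\bd{F}_{u_{i_0}}-\bd{F}_{-u_{i_0}}$, the parallelogram identity together with the midpoint convexity of $t\mapsto t^{\gamma_{i_0}/2}$ (which holds since $\gamma_{i_0}\geqslant 2$) gives $|\bd{F}_{u_{i_0}}|^{\gamma_{i_0}}+|\bd{F}_{-u_{i_0}}|^{\gamma_{i_0}}\geqslant|\nabla\bd\psi|^{\gamma_{i_0}}+|u_{i_0}|^{\gamma_{i_0}}|\nabla\bd{a}_3(\bd\psi)|^{\gamma_{i_0}}$, and retaining only the $i_0$-th pair of terms in $W(x,\bd\psi)$ delivers the announced inequality. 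The main obstacle is the first step: once $\bd{G}=\bd{N}\bd{N}^{\mathrm{T}}$ with $\bd{N}$ affine in $(\bd{A},\bd{B})$ is identified, polyconvexity is just classical Schatten convexity and coerciveness reduces to tracking $\lambda_{\min}(\bd{D})\geqslant c_0'>0$ and the elementary parallelogram estimate.
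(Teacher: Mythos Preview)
Your proof is correct and follows essentially the same route as the paper. Your $\bd{M}(x,v)=[\bd{g}^{1}\,|\,\bd{g}^{2}]$ is exactly the transpose of the paper's $\bd{D}(x,v)=\bd{e}_{\alpha}\otimes\bd{g}^{\alpha}$, so your factorisation $\bd{G}=\bd{M}\bd{F}_u^{\mathrm{T}}\bd{F}_u\bd{M}^{\mathrm{T}}$ coincides with the paper's $\bd{G}=\bd{D}^{\mathrm{T}}\bd{F}_u^{\mathrm{T}}\bd{F}_u\bd{D}$; you phrase the convexity via Schatten norms while the paper simply says ``composition of a linear function and a convex function'', and for coerciveness you invoke Loewner monotonicity on $\bd{M}^{\mathrm{T}}\bd{M}$ where the paper bounds $|\bd{F}_u|$ by $|\bd{F}_u\bd{D}|\cdot\|\nabla\bd{\varphi}+v\nabla\bd{a}_3\|_{\infty}$ directly---but these are equivalent formulations of the same two ideas, and both finish with the parallelogram identity $|\bd{F}_u|^{2}+|\bd{F}_{-u}|^{2}=2|\nabla\bd{\psi}|^{2}+2u^{2}|\nabla\bd{a}_3(\bd{\psi})|^{2}$.
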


\begin{proof}
Let
$\bd{D}(x,v):=\bd{e}_{\alpha}\otimes \bd{g}^{\alpha}(x,v)$ where $(\bd{e}_1,\bd{e}_2)$ denotes the canonical basis of $\mathbb{R}^2$ and $|v|\leqslant \varepsilon$.
Then
\[
\bd{G}(x,\bd{\psi},u,v)=\bd{D}(x,v)^{\mathrm{T}}(\nabla \bd{\psi}+u
\nabla \bd{a}_3(\bd{\psi}))^{\mathrm{T}}(\nabla \bd{\psi}+u
\nabla \bd{a}_3(\bd{\psi}))\bd{D}(x,v).
\]
As a composition of a linear function
and a convex function, the function $\mathbb{F}(x,u,v,\cdot) : (\mathbb{M}^{3\times 2})^2 \rightarrow \mathbb{R}$ defined by
\[
\mathbb{F}(x,u,v,\bd{A},\bd{B}):=\mathrm{tr}\left(\left\{\bd{D}(x,v)^{\mathrm{T}}(\bd{A}+u\bd{B})^{\mathrm{T}}(\bd{A}+u\bd{B})\bd{D}(x,v)\right\}^{\gamma/2}\right)
\]
is convex for all $\gamma \geqslant 2$. By noting that
\[
\mathrm{tr}\left( \bd{G}(x,\bd{\psi},u,v)^{\gamma/2}\right)=\mathbb{F}(x,u,v,\nabla \bd{\psi},\nabla \bd{a}_3(\bd{\psi})),
\]
we infer that $W$ is polyconvex.

It remains to prove the coerciveness inequality. By the equivalence of norms on a finite-dimensional space, it follows that  for each $\gamma \geqslant 1$ there exists a constant $C_{\gamma}>0$ such that
\[
\mathrm{tr}\left( \bd{G}(x,\bd{\psi},u,v)^{\gamma/2}\right)\geqslant
 C_{\gamma}\left(\mathrm{tr}\,\bd{G}(x,\bd{\psi},u,v)\right)^{\gamma/2}.
\]
Since $\mathrm{tr}\,\bd{G}(x,\bd{\psi},u,v)=
|(\nabla \bd{\psi}+u
\nabla \bd{a}_3(\bd{\psi}))\bd{D}(x,v)|^2$ and
\[
\nabla \bd{\psi}+u
\nabla \bd{a}_3(\bd{\psi})=(\nabla \bd{\psi}+u
\nabla \bd{a}_3(\bd{\psi}))\bd{D}(x,v)(\nabla \bd{\varphi}+v \nabla \bd{a}_3),
\]
we infer that
\[
|\nabla \bd{\psi}+u
\nabla \bd{a}_3(\bd{\psi})|\leqslant |(\nabla \bd{\psi}+u
\nabla \bd{a}_3(\bd{\psi}))\bd{D}(x,v)|
(\|\nabla \bd{\varphi}\|_{\infty,\omega}+\varepsilon\| \nabla \bd{a}_3\|_{\infty,\omega})
\]
and that there exists a constant $C>0$ such that for all $u\in \R$ and all $(v,w)\in [-\varepsilon,\varepsilon]^2$,
\begin{align*}
&\mathrm{tr}\left( \bd{G}(x,\bd{\psi},u,v)^{\gamma/2}\right)+ \mathrm{tr}\left(  \bd{G}(x,\bd{\psi},-u,w)^{\gamma/2}\right)\\
& \qquad \qquad \qquad \qquad \quad \geqslant C\{|\nabla \bd{\psi}+u
\nabla \bd{a}_3(\bd{\psi})|^{2}+|\nabla \bd{\psi}-u
\nabla \bd{a}_3(\bd{\psi})|^{2}\}^{\gamma /2}.
\end{align*}
The coerciveness inequality follows by noting that
\[
|\nabla \bd{\psi}+u
\nabla \bd{a}_3(\bd{\psi})|^{2}+|\nabla \bd{\psi}-u
\nabla \bd{a}_3(\bd{\psi})|^{2}=2|\nabla \bd{\psi}|^{2}+2u^2|\nabla \bd{a}_3(\bd{\psi})|^{2}.
\]
\end{proof}

\end{document}